\theoremstyle{plain}
\newtheorem{thm}{Theorem}[section]
\newtheorem*{main}{Theorem}
\newtheorem*{lem}{Lemma}
\newtheorem{prop}[thm]{Proposition}
\newtheorem{ch}{Characterization}
\theoremstyle{definition}
\newtheorem*{ex}{Example}
\newtheorem{defn}[thm]{Definition}
\newtheorem{rmk}[thm]{Remark}
\newcommand{\bb}{bigrassmannian }
\begin{document}

%Revised Feb. 28, 2010 by the author.

\title[Enumeration of Bigrassmannian permutations]{Enumeration of bigrassmannian permutations below a permutation in Bruhat order}
\author{Masato Kobayashi}
\date{\today}                                           % Activate to display a given date or no date
%\thanks{The author would like to thank S.B. Mulay for his helpful comment and suggestion to this article.}
%\dedicatory{Dedicated to }
\subjclass[2000]{Primary:20F55;\,Secondary:20B30}
\keywords{Coxeter groups, Symmetric groups, Bruhat order, Bigrassmannian}
\address{Department of Mathematics\\
the University of Tennessee, Knoxville, TN 37996}
\email{kobayashi@math.utk.edu}
\thanks{This article will appear in Order (published online on May 13, 2010). The final publication is available at \textsf{springerlink.com}.}

\maketitle
\begin{abstract}In theory of Coxeter groups, bigrassmannian elements are well known as elements which have precisely one left descent and precisely one right descent. In this article, we prove formulas on enumeration of bigrassmannian permutations weakly below a permutation in Bruhat order in the symmetric groups. For the proof, we use equivalent characterizations of bigrassmannian permutations by Lascoux-Sch\"{u}tzenberger and Reading.
\end{abstract}

\section{Introduction}

In the theory of Coxeter groups, bigrassmannian elements are known as elements which have precisely one left descent and precisely one right descent. They play a significant role to investigate structure of the Bruhat order \cite{geck}. In particular, in the symmetric group (type A), bigrassmannian permutations have many nice order-theoretic properties. First, Lascoux-Sch\"{u}tzenberger proved \cite{lascoux} that a permutation is bigrassmannian if and only if it is join-irreducible. For definition of join-irreduciblity, see \cite[Sections 2]{reading4}. Second, Reading \cite{reading4} characterized join-irreducible permutations as certain minimal monotone triangles. \\ \indent In this article, we will make use of these characterizations to answer the following question:\,given a permutation $x$, how can we find the number of bigrassmannian permutations weakly below it in Bruhat order? Unfortunately, this is not easy from the usual definition of Bruhat order. Instead, it is much easier to use monotone triangles because the set of monotone triangles has a partial order which is equivalent to Bruhat order over the symmetric groups. Moreover, there is a natural identification of join-irreducible (equivalently, bigrassmannian) permutations with entries of monotone triangles. We will see detail of these in Section 2. In Section 3, we prove the main result:
\begin{main}
For $x\in S_n$, let $I(x)$ be the set of inversions of $x$ and $\beta(x)$ the number of \bb  permutations weakly below $x$ in the Bruhat order. Then we have
\begin{align*}
\beta(x)=\sum_{a=1}^{n-1} (x(a)-a)(n-a)=\frac{1}{2}\sum_{a=1}^n (x(a)-a)^2=\sum_{(i,\,j)\in I(x)} (x(i)-x(j)).
\end{align*}
\end{main}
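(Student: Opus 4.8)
The plan is to reduce the whole statement to a single counting identity on monotone triangles and then extract the three closed forms by elementary manipulation. Writing $\MT(x)$ for the monotone triangle of $x$ and letting $m_{k,i}(x)$ denote its $i$-th entry in row $k$ (the $i$-th smallest element of $\{x(1),\dots,x(k)\}$), the core claim I would establish is the master formula
\[
\beta(x)=\sum_{k=1}^{n}\sum_{i=1}^{k}\bigl(m_{k,i}(x)-i\bigr).
\]
This is where the cited characterizations do the real work: by Lascoux--Sch\"utzenberger the bigrassmannian permutations are exactly the join-irreducibles, and by Reading each join-irreducible is a minimal monotone triangle pinned down by one distinguished position $(k,i)$ together with a value $v>i$. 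Since the order on monotone triangles is entrywise and agrees with Bruhat order, such a join-irreducible lies weakly below $x$ precisely when $m_{k,i}(x)\ge v$. Counting, for each position $(k,i)$, the admissible values $v\in\{i+1,\dots,m_{k,i}(x)\}$ yields exactly $m_{k,i}(x)-i$ bigrassmannians, and summing over positions gives the master formula.

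The main obstacle is precisely this master formula, and concretely the verification that the correspondence ``join-irreducible $\leftrightarrow$ triple $(k,i,v)$ with $i<v\le m_{k,i}(x)$'' is a genuine bijection. Because the entries of a monotone triangle are not independent, I must check that distinct triples record distinct bigrassmannians (a priori one join-irreducible could be read off at several positions), that the value $v=i$ is correctly excluded as the trivial identity case, and that every bigrassmannian weakly below $x$ arises in this way. I expect this to follow from Reading's explicit description of the minimal monotone triangle attached to each join-irreducible, but it is the step that genuinely needs care.

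Granting the master formula, the first equality is a routine summation. The entries of row $k$ are the sorted values $\{x(1),\dots,x(k)\}$, so $\sum_{i=1}^{k} m_{k,i}(x)=\sum_{j=1}^{k}x(j)$, whence $\sum_{i=1}^{k}\bigl(m_{k,i}(x)-i\bigr)=\sum_{j=1}^{k}x(j)-\binom{k+1}{2}$. Summing over $k$ and interchanging the order of summation, the value $x(j)$ is counted in rows $j,\dots,n$, which gives $\beta(x)=\sum_{j=1}^{n}(x(j)-j)(n-j+1)$. Since $\sum_{j=1}^{n}(x(j)-j)=0$, this collapses to $\sum_{j=1}^{n}(x(j)-j)(n-j)=\sum_{a=1}^{n-1}(x(a)-a)(n-a)$, the first displayed form.

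For the remaining two equalities I would argue purely formally. Setting $d_a=x(a)-a$ and using both $\sum_a d_a=0$ and $\sum_a x(a)^2=\sum_a a^2$ (as $x$ permutes $\{1,\dots,n\}$), one checks that $\sum_{a=1}^{n-1}(x(a)-a)(n-a)=-\sum_a a\,d_a=\tfrac12\sum_a d_a^2$, which is the middle expression. For the inversion form I would rewrite $x(i)-x(j)=\#\{v:\,x(j)<v\le x(i)\}$, so that $\sum_{(i,j)\in I(x)}(x(i)-x(j))$ counts triples $(i,j,v)$ with $i<j$ and $x(j)<v\le x(i)$; grouping these triples by the threshold $v$ (equivalently, exploiting that $\beta$ is invariant under $x\mapsto x^{-1}$, which interchanges position differences and value differences over inversions) identifies this count with $\sum_a a^2-\sum_a a\,x(a)$, matching the middle expression and closing the chain of equalities.
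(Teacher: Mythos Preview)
Your master formula and the derivation of the first two equalities are essentially the paper's argument: the paper packages your $\sum_{k,i}(m_{k,i}(x)-i)$ as $\Sigma(x)-\Sigma(e)$ in its Proposition, and then obtains $\sum_{a}(x(a)-a)(n-a)$ and $\tfrac12\sum_a(x(a)-a)^2$ by the same permutation identities $\sum x(a)=\sum a$ and $\sum x(a)^2=\sum a^2$. Your worry about the bijection ``triple $(k,i,v)\leftrightarrow$ join-irreducible'' is exactly what the paper handles via Reading's $J_{abc}$: distinct triples give distinct join-irreducibles, and $J_{abc}\le x\iff x_{ab}\ge c$, so the count at position $(a,b)$ is $x_{ab}-b$. (Your sum runs to $k=n$ while the paper's runs to $n-1$; since row $n$ contributes $0$ this is harmless, but note that no $J_{abc}$ is indexed by $a=n$.)

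Where you genuinely diverge is the inversion formula. The paper proves it by induction on $\ell(x)$: a short lemma gives $\beta(x)-\beta(xs_a)=x(a)-x(a+1)$ from the already-established first formula, and then one partitions $I(x)$ according to which coordinates lie in $\{a,a+1\}$ to push the induction through. Your proposed route is direct and in principle shorter, but as written it is the weakest part of the proposal: ``grouping the triples by $v$'' does not by itself produce $\sum_a a^2-\sum_a a\,x(a)$, and the appeal to $\beta(x)=\beta(x^{-1})$ only shows the inversion sum for $x$ equals the \emph{position}-difference inversion sum for $x^{-1}$, which is not yet the target. The clean way to finish your idea is to collect the coefficient of $x(a)$ in $\sum_{(i,j)\in I(x)}(x(i)-x(j))$: it is
\[
\#\{j>a:x(j)<x(a)\}-\#\{i<a:x(i)>x(a)\}=x(a)-a,
\]
whence the sum equals $\sum_a x(a)(x(a)-a)=\sum_a a^2-\sum_a a\,x(a)$, matching the middle expression. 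With that one line made explicit, your argument is correct and more economical than the paper's induction; the paper's approach, on the other hand, yields as a byproduct the pleasant transposition identity $\beta(x)-\beta(xt_{ij})=(j-i)(x(i)-x(j))$.
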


\section{Two characterizations of bigrassmannian permutations}

We begin with definition of the Bruhat order.

\begin{defn}\label{br}{Let $x\in S_n$.
A pair of integers $(i, j)$ is said to be an \emph{inversion} of $x$ if $1\le i<j\le n$ and $x(i)>x(j)$.
Let $I(x)$ denote the set of all inversions. Define the length $\ell(x)$ to be $\#I(x)$.
Let $t_{ij}$ denote a transposition $(i<j)$. In particular, write $s_i=t_{i, i+1}$. It is well-known that $S=\{s_1, \dots, s_{n-1}\}$ generates $S_n$ and $\ell(x)$ is equal to the minimum number $k$ such that $x=s_{i_1}s_{i_2}\dots s_{i_k}$ (the identity permutation $e$ has length 0 with the empty word). A \emph{reduction} of $x$ is a permutation of the form %\indent $t_{ik}x$ with $(i, k)\in I(x)$, $\ell(t_{ik}x)<\ell(x)$ or\\
$xt_{ij}$ with $(i, j)\in I(x)$. Define the Bruhat order on $S_n$ as $w\le y$ if  there exist $x_0, x_1, \dots, x_k\in S_n$ such that $x_0=w, x_k=y$ and $x_{i}$ is a reduction of $x_{i+1}$ for all $0\le i\le k-1$.}\end{defn}

%It is important to mention that $x\mapsto x^{-1}$ is an order-preserving automorphism on $S_n$.

\begin{defn}
For $x \in S_n$, define \emph{left} and \emph{right descents} to be 
\begin{align*}
D_L(x)&=\{s_i\in S \mid x^{-1}(i)>x^{-1}(i+1)\},\\
D_R(x)&=\{s_i\in S \mid x(i)>x(i+1)\}.
\end{align*}
We say that $x$ is \emph{bigrassmannian} if $\#D_L(x)=\#D_R(x)=1$. Define 
\begin{align*}
B(x)&=\{w \mid w\le x \mbox{ and  $w$ is bigrassmannian}\},\\
\beta(x)&=\#B(x).
\end{align*}
\end{defn}

%Obviously $x$ is bigrassmannian if and only if so is $x^{-1}$. \\
We would like to know $B(x)$ and $\beta(x)$ for a given $x$. As mentioned earlier, this is not easy from the definition of Bruhat order. However, the following two equivalent characterizations of bigrassmannian permutations by Lascoux-Sch\"{u}tzenberger and Reading are helpful.

\begin{ch}\cite[Th\'{e}or\`{e}me 4.4]{lascoux} \label{33}$x\in S_n$ is bigrassmannian if and only if it is join-irreducible (Lascoux and Sch\"{u}tzenberger used terminology the \emph{bases} rather than the set of join-irreducible elements).
\end{ch}

Before Reading's characterization, let us see the definition of monotone triangles.

\begin{defn}\label{monot}
 A \emph{monotone triangle} $x$ of order $n$ is an $n(n-1)/2$-tuple $(x_{ab}\: | \: 1\le b\le a\le n-1)$ such that $1\le x_{ab} \le n, x_{ab}< x_{a,b+1}, x_{ab}\ge x_{a+1, b}$ and $x_{ab}\le x_{a+1,b+1}$ for all $a, b$.  Regard a permutation $x\in S_n$ as a monotone triangle of order $n$ as follows:
for each $1\le a\le n-1$, let $x_{a1}, x_{a2}, \dots, x_{aa}$ be integers such that $\{x(1), x(2), \dots, x(a)\}=\{x_{a1}, x_{a2}, \dots, x_{aa}\}, x_{ab}<x_{a,b+1}$ for all $1\le b\le a-1$. Then $x=(x_{ab})$ is a monotone triangle. Denote by $L(S_n)$ the set of all monotone triangles of order $n$. Define a partial order on $L(S_n)$ by $x\le y$ if $x_{ab} \le y_{ab}$ for all $a, b$.
\end{defn}

Following \cite[Section 8]{reading4}, we introduce an important family of monotone triangles.

\begin{defn}\label{mini}For positive integers $(a, b, c)$ such that $1\le b\le a\le n-1$ and $b+1\le c\le n-a+b$, define $J_{abc}$ to be the componentwise smallest monotone triangle such that $a, b$ entry is $\ge c$ (notice that Reading worked on $S_{n+1}$($=$ Coxeter group of type $\textnormal{A}_n$) while here we are working on $S_{n}$). In other words, $J_{abc}$ satisfies $x\ge J_{abc}$ if and only if $x_{ab}\ge c$ for $x\in L(S_n)$.
\end{defn}

\begin{ch}\cite[Section 8]{reading4}
$x\in S_n$ is join-irreducible if and only if there exist some $(a, b, c)$ with $1\le b\le a\le n-1$ and $b+1\le c\le n-a+b$ such that $x=J_{abc}$.
\end{ch}

As a consequence of minimality of $J_{abc}$ in Definition \ref{mini}, it is easy to compare join-irreducible monotone triangles at the same position $(a, b)$ as \[J_{abc}<J_{abd} \iff c<d\]
for all $c, d$ with $b+1\le c, d\le n-a+b$.
Hence we may identify entries appearing in ($x_{ab}$) with ``($J_{abx_{ab}})$". This identification is quite useful to find $\beta(x)$ (because of Characterizations 1 and 2) as we shall see in Proposition \ref{p}.

\begin{rmk}
In fact, $L(S_n)$ is a distributive lattice and the MacNeille completion of $S_n$ (meaning smallest lattice which contains $S_n$). In particular, for all $x, y\in S_n$, $x\le y$ in Bruhat order (Definition \ref{br}) is equivalent to $x\le y$ as monotone triangles (Definition \ref{monot}).
Since join-irreducible elements are invariant under the MacNeille completion, even for $x\in L(S_n)$, $\beta(x)$ makes sense as the number of join-irreducible monotone triangles weakly below $x$. For detail, see \cite{balcza,bjorner}, \cite[Th\'{e}or\`{e}me 4.4]{lascoux} and \cite[Sections 6, 7, 8]{reading4}.
\end{rmk}

\begin{defn}
For $x\in L(S_n)$, define
\begin{align*}
\Sigma(x)&= \sum_{a=1}^{n-1} \sum_{b=1}^{a} x_{ab}.
\end{align*}
\end{defn}

\begin{prop}\label{p} \hfill
\begin{enumerate}
%\item We have $j(\id)=m(w_0)=0, j(w_0)=m(\id)=(n-1)n(n+1)/6$. 
%\item \label{above}For all $x\in S_n$, we have
%\begin{align*}
%j(x)=j(x^{-1})=j(w_0xw_0)=m(xw_0)=m(w_0x).
%\end{align*}
\item For each $a, b$ such that $1\le b\le a\le n-1$, there is a chain of \bb permutaitons:
\begin{align*}
J_{a, b, b+1} &< J_{a,b,b+2} < \dots < J_{a, b, n-a+b}.
%M_{a, b, b+1} &< M_{a,b,b+2} < \dots < M_{a, b, n-a+b}
\end{align*}
Consequently for $x\in L(S_n)$,
\begin{align*}
 \underbrace{J_{a, b, b+1}, J_{a, b, b+2}, \dots, J_{abx_{ab}}}_{x_{ab}-b}\in B(x).
% \underbrace{M_{a, b, x_{ab}+1}, M_{a, b, x_{ab}+2}, \dots, M_{a, b, n-a+b}}_{n-a+b-x_{ab}}\in U_m[x].
\end{align*}
\item Let $x\in L(S_n)$. Then $\beta(x)=\Sigma(x)-\Sigma(e)$.
%m(x)&=\Sigma(w_0)-\Sigma(x).
\end{enumerate}
\end{prop}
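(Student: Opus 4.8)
The plan is to derive both parts from the defining property of $J_{abc}$ in Definition \ref{mini} together with Characterizations 1 and 2, thereby reducing the count of $B(x)$ to a sum over positions $(a,b)$. For Part (1), the chain is immediate from the equivalence $J_{abc}<J_{abd}\iff c<d$ recorded just after Characterization 2: taking $c=b+1,b+2,\dots,n-a+b$ yields a strictly increasing chain of join-irreducible, hence (by Characterization 1) bigrassmannian, monotone triangles. For the ``consequently'' clause, I would fix $x\in L(S_n)$ and a position $(a,b)$ and note that every entry of a monotone triangle satisfies $b\le x_{ab}\le n-a+b$, since the row $x_{a1}<\cdots<x_{aa}$ is strictly increasing with values in $[1,n]$. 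Hence for each $c$ with $b+1\le c\le x_{ab}$ the triple $(a,b,c)$ is admissible, $J_{abc}$ is bigrassmannian, and by the defining property $J_{abc}\le x\iff x_{ab}\ge c$, which holds here; so all $x_{ab}-b$ of these elements lie in $B(x)$.

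For Part (2), I would prove the set identity
\[
B(x)=\bigcup_{a=1}^{n-1}\bigcup_{b=1}^{a}\{\,J_{abc}:b+1\le c\le x_{ab}\,\}
\]
and then count. The inclusion $\supseteq$ is exactly Part (1). For $\subseteq$, take $w\in B(x)$; by Characterizations 1 and 2, $w$ is join-irreducible, so $w=J_{abc}$ for some admissible $(a,b,c)$, and $w\le x$ forces $x_{ab}\ge c$, i.e. $b+1\le c\le x_{ab}$. Granting that this union is disjoint and the parametrization one-to-one, each position $(a,b)$ contributes exactly $x_{ab}-b$ distinct elements, so
\[
\beta(x)=\sum_{a=1}^{n-1}\sum_{b=1}^{a}(x_{ab}-b)=\Sigma(x)-\sum_{a=1}^{n-1}\sum_{b=1}^{a}b=\Sigma(x)-\Sigma(e),
\]
the last step using $e_{ab}=b$, which holds because $\{e(1),\dots,e(a)\}=\{1,\dots,a\}$.

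The one genuine obstacle is the injectivity of $(a,b,c)\mapsto J_{abc}$, i.e. that the chains attached to distinct positions $(a,b)$ share no element; without it the displayed sum only bounds $\beta(x)$ from above. The equivalence $J_{abc}<J_{abd}\iff c<d$ already separates elements sharing the same $(a,b)$, so it remains to recover $(a,b)$ from $w=J_{abc}$. I would argue that, as a join-irreducible element of the finite distributive lattice $L(S_n)$ (see the Remark following Definition \ref{mini}), $w$ covers a unique element $w_\ast$, and that a cover in $L(S_n)$ changes exactly one entry by $1$; the unique position where $w$ and $w_\ast$ differ must then be $(a,b)$ (lowering the minimal triangle's $(a,b)$ entry from $c$ to $c-1$), and $c=w_{ab}$ is read off, giving injectivity. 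A more abstract route bypasses the parametrization altogether: in a finite distributive lattice $\beta(x)$, the number of join-irreducibles weakly below $x$, equals the rank of $x$, and $\Sigma(x)-\Sigma(e)$ is exactly the rank function of $L(S_n)$ once each cover is known to change a single entry by $1$. Establishing that cover description is the crux in either approach.
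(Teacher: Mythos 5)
Your argument is, in outline, the same as the paper's: part (1) comes from the equivalence $J_{abc}<J_{abd}\iff c<d$, and part (2) from decomposing $B(x)$ into the chains attached to the positions $(a,b)$ and counting $x_{ab}-b$ elements at each position. The extra details you supply --- the bounds $b\le x_{ab}\le n-a+b$ guaranteeing that each triple $(a,b,c)$ with $c\le x_{ab}$ is admissible, and the two inclusions of the set identity via Characterizations 1 and 2 together with the defining property of $J_{abc}$ --- are exactly what the paper's very terse proof leaves to the reader, so on these points you are simply more explicit, not different.

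The one substantive issue is the disjointness you flag, i.e.\ injectivity of $(a,b,c)\mapsto J_{abc}$ across distinct positions, and here your write-up is incomplete by your own admission: both of your proposed routes rest on the claim that a cover in $L(S_n)$ changes exactly one entry by $1$, which you never prove, and whose natural proof (via Birkhoff's correspondence between elements of a distributive lattice and down-sets of join-irreducibles) itself passes through the very parametrization you are trying to justify, so there is a risk of circularity. Note, however, that the paper does not prove this either: it is absorbed into the ``identification'' of entries $x_{ab}$ with join-irreducibles $J_{ab x_{ab}}$ that the paper sets up just before the proposition, imported from Reading's Section~8, where the correspondence between admissible triples and join-irreducibles is a bijection. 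So the cleanest repair is to invoke that citation, as the paper implicitly does. If you want a self-contained fix, an elementary one is available and avoids lattice theory entirely: from minimality one checks that $J_{abc}$ is the permutation $1,\dots,b-1,\,c,\dots,c+a-b,\,b,\dots,c-1,\,c+a-b+1,\dots,n$, whose unique right descent is $s_a$ and unique left descent is $s_{c-1}$, and for which $b-1=\#\{i\le a \mid J_{abc}(i)<c\}$; thus the triple $(a,b,c)$ is recoverable from the element, which gives the injectivity and completes your count.
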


\begin{proof}\hfill
\begin{enumerate}
%\item is obvious.
%\item \label{j} Use $J_{abc}\le x \iff J_{abc}^{-1}\le x^{-1} \iff w_0x\le w_0M_{abc}$ etc. by automorphisms on $S_n$. 
\item Use $J_{abc}<J_{abd} \iff c<d$. 
\item Note that $e_{ab}=b$ for all $a, b$. It then follows from (1) that 
\begin{align*}
\beta(x)&=\{ w\in S_n \mid w\le x \mbox{ and } w \mbox{ is bigrassmannian}\}\\
&=\sum_{a=1}^{n-1}\sum_{b=1}^{a}\#\{J_{abc} \mid b+1\le c\le x_{ab}\}\\
&=\sum_{a=1}^{n-1}\sum_{b=1}^{a}(x_{ab}-b)\\
&=\Sigma(x)-\Sigma(e).
\end{align*}
\end{enumerate}
\end{proof}

\section{Proof of Theorem}

We saw the formula of $\beta(x)$ for general monotone triangles $x$.
If $x$ is a permutation, there are simpler formulas of $\beta(x)$ because $x(a)$ appears $n-a$ times in entries of the monotone triangle for each $a$ so that it is easier to compute $\Sigma(x)$.

%let $I(x)$ be the set of inversions of $x$ and $\beta(x)$ the number of \bb  permutations weakly below $x$ in the Bruhat order. Then 
\begin{main}
For all $x\in S_n$, we have
\begin{align*}
\beta(x)=\sum_{a=1}^{n-1} (x(a)-a)(n-a)=\frac{1}{2}\sum_{a=1}^n (x(a)-a)^2=\sum_{(i,\,j)\in I(x)} (x(i)-x(j)).
\end{align*}
\end{main}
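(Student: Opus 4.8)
The plan is to reduce everything to Proposition \ref{p}(2), which already supplies $\beta(x)=\Sigma(x)-\Sigma(e)$, and then to evaluate $\Sigma(x)$ explicitly when $x$ is a genuine permutation rather than an arbitrary monotone triangle. The structural input I need is the multiplicity count flagged in the paragraph opening Section 3: by Definition \ref{monot}, row $a$ of the monotone triangle $(x_{ab})$ is precisely the sorted set $\{x(1),\dots,x(a)\}$, so the value $x(a)$ enters at row $a$ and then persists through every row $a,a+1,\dots,n-1$. Hence $x(a)$ occurs exactly $n-a$ times among the entries, giving $\Sigma(x)=\sum_{a=1}^{n}(n-a)\,x(a)$. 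Subtracting the same identity for $e$ (where $e(a)=a$) yields $\beta(x)=\sum_{a=1}^{n-1}(n-a)(x(a)-a)$, the first of the three formulas.

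For the second formula I would argue purely algebraically, exploiting that $x$ is a permutation so that $\sum_{a=1}^{n}x(a)=\sum_{a=1}^{n}a$ and $\sum_{a=1}^{n}x(a)^2=\sum_{a=1}^{n}a^2$. Expanding $\sum_a (n-a)(x(a)-a)$ and using the first identity to cancel the term $n\sum_a(x(a)-a)$ leaves $\sum_a a^2-\sum_a a\,x(a)$. Expanding $\tfrac12\sum_a (x(a)-a)^2$ and using the second identity produces the same quantity $\sum_a a^2-\sum_a a\,x(a)$. So the two expressions coincide; this step is a routine regrouping of sums once the permutation constraints are invoked.

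For the third formula I would rewrite the inversion sum via $x(i)-x(j)=\max(x(i)-x(j),0)$ on inversions, so that $\sum_{(i,j)\in I(x)}(x(i)-x(j))=\sum_{i<j}\max(x(i)-x(j),0)=\tfrac12\sum_{i<j}(x(i)-x(j))+\tfrac12\sum_{i<j}|x(i)-x(j)|$. The absolute-value sum is independent of $x$, since the multiset $\{x(i)\}$ is $\{1,\dots,n\}$, so it equals $\sum_{i<j}(j-i)$; the signed sum collapses to $\sum_k(n+1-2k)x(k)$ after counting how often each $x(k)$ appears as a first versus a second coordinate. Combining these and reinserting the $x=e$ case to identify $\sum_{i<j}(j-i)$, the inversion sum becomes $\tfrac12\sum_k(n+1-2k)(x(k)-k)$. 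Comparing with the first formula, the coefficient discrepancy of each $(x(k)-k)$ is the constant $(n-a)-\tfrac12(n+1-2a)=\tfrac{n-1}{2}$, so the difference of the two expressions is $\tfrac{n-1}{2}\sum_k(x(k)-k)=0$, once more because $x$ is a permutation.

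The main obstacle is genuinely only the first step: pinning down the multiplicity $n-a$ rigorously from Definition \ref{monot}, since everything afterward is elementary manipulation that reduces to the two permutation identities $\sum x(a)=\sum a$ and $\sum x(a)^2=\sum a^2$. Once the count $\Sigma(x)=\sum_a (n-a)x(a)$ is secured, the three displayed sums are revealed to be three repackagings of the single quantity $\sum_a a^2-\sum_a a\,x(a)$.
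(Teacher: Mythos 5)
Your proposal is correct, and it splits naturally into two parts relative to the paper. For the first two equalities you take essentially the paper's route: the multiplicity count (each value $x(a)$ occurs exactly $n-a$ times in the monotone triangle, since it enters in row $a$ and persists through row $n-1$) gives $\Sigma(x)=\sum_a (n-a)x(a)$, and then Proposition \ref{p}(2) plus the permutation identities $\sum_a x(a)=\sum_a a$ and $\sum_a x(a)^2=\sum_a a^2$ reduce both displayed sums to $\sum_a a^2-\sum_a a\,x(a)$; this is the same computation the paper performs, merely organized as two evaluations of a common quantity rather than a chain of equalities. For the third equality, however, your route is genuinely different. The paper first proves Lemma \ref{l1}, namely $\beta(x)-\beta(xt_{ij})=(j-i)(x(i)-x(j))$, and then inducts on $\ell(x)$: it picks a right descent $(a,a+1)\in I(x)$, sets $w=xs_a$, partitions $I(w)$ into five pieces, matches them bijectively with the corresponding pieces of $I(x)$, and combines the inductive hypothesis with the lemma. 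You instead evaluate the inversion sum in closed form with no induction: using $\max(u,0)=\frac{1}{2}(u+|u|)$, the permutation-invariance of $\sum_{i<j}|x(i)-x(j)|$, and the count $\sum_{i<j}(x(i)-x(j))=\sum_k (n+1-2k)x(k)$, you obtain $\sum_{(i,j)\in I(x)}(x(i)-x(j))=\frac{1}{2}\sum_k (n+1-2k)(x(k)-k)$, which differs from the first formula by $\frac{n-1}{2}\sum_k (x(k)-k)=0$ (I checked the constant: $(n-k)-\frac{1}{2}(n+1-2k)=\frac{n-1}{2}$, and your formula gives $13$ on the paper's example $x=42513$). Your argument is shorter and avoids the inversion-set bookkeeping entirely, at the cost of being a purely formal identity; the paper's argument is longer but yields Lemma \ref{l1} as a byproduct, which has independent interest since it describes exactly how $\beta$ decreases along a reduction $x\mapsto xt_{ij}$ in the Bruhat order, something your computation does not expose.
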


\textit{Proof.} We show the first equality.
\begin{align*}
\sum_{a=1}^{n-1} (x(a)-a)(n-a)&=\sum_{a=1}^{n-1} x(a)(n-a)-\sum_{a=1}^{n-1} a(n-a)\\
&=\Sigma(x)-\Sigma(e)=\beta(x).
\end{align*}
Next we check the second equality. 
Since \begin{align*}\sum_{a=1}^nx(a)=\sum_{a=1}^n a \mbox{\quad and \quad }
\sum_{a=1}^nx(a)^2=\sum_{a=1}^n a^2,
\end{align*} we have 
\begin{align*}
\frac{1}{2}\sum_{a=1}^n (x(a)-a)^2&=\frac{1}{2}\sum_{a=1}^n (x(a)^2-2ax(a)+a^2)\\
&=\sum_{a=1}^n (a^2- ax(a))\\
&=\sum_{a=1}^n (a^2- ax(a)+x(a)n-an)\\
&=\sum_{a=1}^n x(a)(n-a)- \sum_{a=1}^n a(n-a)\\
&=\sum_{a=1}^{n-1} x(a)(n-a)- \sum_{a=1}^{n-1} a(n-a)\\
&=\Sigma(x)-\Sigma(e)\\
&=\beta(x).
\end{align*}

Before the proof the last equality, we need a lemma.

\begin{lem}\label{l1}
Let $x\in S_n$ and $i<j$. Then we have 
\begin{align*}
\beta(x)-\beta(xt_{ij})=(j-i)(x(i)-x(j)).
\end{align*}
In particular, $\beta(x)-\beta(xs_i)=x(i)-x(i+1)$.
\end{lem}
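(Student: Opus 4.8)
The plan is to compute the difference $\beta(x) - \beta(xt_{ij})$ directly from the first equality of the Theorem, which is already established at this point in the argument and expresses $\beta$ as a positionwise sum $\beta(x) = \sum_{a=1}^{n-1}(x(a)-a)(n-a)$. The crucial observation is that $xt_{ij}$ agrees with $x$ at every position except $i$ and $j$, where the two values of $x$ are interchanged: $(xt_{ij})(i)=x(j)$ and $(xt_{ij})(j)=x(i)$, while $(xt_{ij})(k)=x(k)$ for $k\neq i,j$. Consequently, in the difference of the two positionwise sums every term with $a\neq i,j$ cancels, and only two pairs of terms survive.

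First I would rewrite $\beta(x) = \sum_{a=1}^{n}(x(a)-a)(n-a)$ (the $a=n$ term vanishes), so that the formula applies uniformly for $i,j\le n$. Subtracting $\beta(xt_{ij})$ from $\beta(x)$ and retaining only the $a=i$ and $a=j$ contributions gives
\[
\beta(x) - \beta(xt_{ij}) = (n-i)\bigl[(x(i)-i) - (x(j)-i)\bigr] + (n-j)\bigl[(x(j)-j) - (x(i)-j)\bigr].
\]
The inner brackets simplify to $x(i)-x(j)$ and $x(j)-x(i)$ respectively; factoring out $x(i)-x(j)$ leaves $(n-i)-(n-j)=j-i$, which yields $(j-i)(x(i)-x(j))$, exactly the claimed identity. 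The special case then follows immediately by taking $j=i+1$, so that $j-i=1$ and $t_{ij}=s_i$, giving $\beta(x)-\beta(xs_i)=x(i)-x(i+1)$.

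Since this is a single algebraic cancellation once the positionwise formula is in hand, I do not expect a genuine obstacle. The only point requiring care is the bookkeeping of the transposition convention, namely that $t_{ij}$ permutes \emph{positions} and hence $xt_{ij}$ swaps the values $x(i)$ and $x(j)$; getting this right is what makes the two surviving terms come out with the correct signs. As an alternative route, one could run the same computation starting from the symmetric formula $\beta(x)=\tfrac12\sum_{a=1}^{n}(x(a)-a)^2$, where each of the two surviving differences of squares factors (via $p^2-q^2=(p-q)(p+q)$) into a multiple of $j-i$, and the two contributions combine to the same result after the factor $\tfrac12$.
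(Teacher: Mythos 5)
Your proposal is correct and is essentially the same argument as the paper's own proof: both apply the already-established formula $\beta(x)=\sum_{a=1}^{n-1}(x(a)-a)(n-a)$ to $x$ and $w=xt_{ij}$, extend the sum harmlessly to $a=n$, cancel all terms with $a\neq i,j$, and factor the two surviving terms into $(j-i)(x(i)-x(j))$. Your bookkeeping of the convention that $xt_{ij}$ swaps the \emph{values} at positions $i$ and $j$ matches the paper exactly, so there is nothing to add.
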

\textit{Proof.}
Let $w=xt_{ij}$.
Note that $w(i)=x(j), w(j)=x(i)$ and $w(a)=x(a)$ for all $a\ne i, j$. Then apply the first equality as just shown to $w$ and $x$:
\begin{align*}
\beta(x)-\beta(w)&=\sum_{a=1}^{n-1} (x(a)-a)(n-a)-\sum_{a=1}^{n-1} (w(a)-a)(n-a)\\
&=\sum_{a=1}^{n} (x(a)-a)(n-a)-\sum_{a=1}^{n} (w(a)-a)(n-a)\\
&=\sum_{a=1}^{n} (x(a)-w(a))(n-a)\\
&=(x(i)-w(i))(n-i)+(x(j)-w(j))(n-j)\\
&=(x(i)-x(j))(n-i)-(x(i)-x(j))(n-j)\\
&=(j-i)(x(i)-x(j)). \, \blacksquare
\end{align*}
  
\begin{proof}[Proof of the last equality]
The proof is induction on $\ell(x)$. If $\ell(x)=0$, then $x=e$ and hence $\beta(e)=0$.
If $\ell(x)>0$, we can choose some $a$ such that $(a, a+1) \in I(x)$ (otherwise $x=e$ since $x(1)<x(2)<\cdots <x(n)$). Let $w=xs_a$. Note that $(a, a+1)\notin I(w)$. Now set
\begin{align*}
I_1(w)&=\{(i, a)\in I(w) \: | \: 1\le i\le a-1 \},\\
I_2(w)&=\{(i, a+1)\in I(w) \: | \: 1\le i\le a-1 \},\\
I_3(w)&=\{(a, j)\in I(w) \: | \: a+2\le j\le n \},\\
I_4(w)&=\{(a+1, j)\in I(w) \: | \: a+2\le j\le n \},\\
I_5(w)&=\{(i, j)\in I(w) \: | \: i,j \not\in \{a, a+1\} \}.
\end{align*}
Clearly $I(w)=\cup I_p(w)$ and the union is disjoint. 
Observe that $(i,a)\in I_1(w) \Longleftrightarrow (i, a+1)\in I_2(x)$ since $w(a)<w(i) \iff x(a+1)< x(i)$ for $1\le i \le a-1$. Therefore
\begin{align*}
\sum_{(i,\, a)\in I_1(w)} (w(i)-w(a))=\sum_{(i,\, a+1)\in I_2(x)}  (x(i)-x(a+1)).
\end{align*}
It is quite similar to show that  
\begin{align*}
(i, a+1)\in I_2(w) &\Longleftrightarrow (i, a)\in I_1(x),\\ 
(a, j)\in I_3(w) &\Longleftrightarrow (a+1, j)\in I_4(x),\\ 
(a+1, j)\in I_4(w) &\Longleftrightarrow (a, j)\in I_3(x),\\ 
(i, j)\in I_5(w) &\Longleftrightarrow (i, j)\in I_5(x).
\end{align*}
Since $\ell(w)=\ell(x)-1$, the hypothesis of induction tells us that
\begin{align*}
\beta(w)=\sum_{(i, \, j)\in I(w)} (w(i)-w(j)).
\end{align*}
Then thanks to the Lemma, we conclude that 
\begin{align*}
\beta(x)&=\beta(w)+(x(a)-x(a+1))\\
&=\sum_{(i,\,j) \in I(w)}(w(i)-w(j)) +(x(a)-x(a+1))\\
&=\sum_{p=1}^5 \sum_{I_p(x)} (x(i)-x(j))+(x(a)-x(a+1))\\
&=\sum_{(i, \, j)\in I(x)} (x(i)-x(j)).
\end{align*}
\end{proof}

%\begin{cor}
%For all $x\in S_n$, we have 
%\begin{align*}
%\ell(x)\le \beta(x)&=\sum_{(i, \, j)\in I(x)} (j-i).
%m(x)&=\sum_{\substack{i<k \\ x(i)<x(k)}}\{x(k)-x(i)\}.
%\end{align*}
%\end{cor}

%\begin{proof}
%The first equality easily follows from $\ell(x)=\#I(x)$.
%For the second, apply Theorem to $x^{-1}$:
%\begin{align*}
%\beta(x)&=\beta(x^{-1})\\
%&=\sum_{(i', \, j') \in I(x^{-1})} ( x^{-1}(i')-x^{-1}(j'))\\
%&=\sum_{(x(j),\, x(i)) \in I(x^{-1})} (j-i) \quad (\textnormal{let} \ i=x^{-1}(j'), j=x^{-1}(i').)\\
%&=\sum_{(i,\, j)\in I(x)}(j-i).
%\end{align*}
%\end{proof}

\begin{ex}
Let $x=42513$. Then
\begin{align*}
\Sigma \left(
\begin{matrix}
4&&&\\
2&4&&\\
2&4&5&\\
1&2&4&5
\end{matrix}
\right)
\ - \
\Sigma \left(
\begin{matrix}
1&&&\\
1&2&&\\
1&2&3&\\
1&2&3&4
\end{matrix}
\right)
= \
\Sigma \left(
\begin{matrix}
3&&&\\
1&2&&\\
1&2&2&\\
0&0&1&1
\end{matrix}
\right)
=13,
\end{align*}
\begin{align*}
\frac{1}{2}((x(1)-1)^2+(x(2)-2)^2+(x(3)-3)^2+(x(4)-4)^2+(x(5)-5)^2)=13,\\
(x(1)-1)4+(x(2)-2)3+(x(3)-3)2+(x(4)-4)1=13,
\end{align*}
\end{ex}
\begin{align*}
\begin{split}
\sum_{(i,\,j)\in I(x)}(x(i)-x(j))&=x(1)-x(2)+x(1)-x(4)+x(1)-x(5)+x(2)-x(4)\\
&\quad {}+x(3)-x(4)+x(3)-x(5)=13.
\end{split}
\end{align*}

\vspace{1ex}
\begin{center}
\textbf{Acknowledgment.}\\
\end{center}
The author would like to thank the anonymous referee for helpful comments and suggestions.

\nocite{reading4,balcza,bjorner,geck}
\bibliography{Kobayashi_enbigr}
\bibliographystyle{amsplain}

\end{document}